\numberwithin{equation}{section}
\newcommand{\cc}{\mathbb{C}}
\newcommand{\F}{\mathbb{F}}
\newtheorem{theorem}{{\textbf Theorem}}
\newtheorem{proposition}[theorem]{{\textbf Proposition}}
\newtheorem{lemma}[theorem]{{\textbf Lemma}}
\newtheorem{defn}[theorem]{{\textbf Definition}}
\newtheorem{remit}[theorem]{{\textbf Remark}}
\title[Constructions by ruler and compass, together with a fixed conic]{Constructions by ruler and compass, \\together with a fixed conic}
\author{Seungjin Baek, Insong Choe, Yoonho Jung, Dongwook Lee and Junggyo Seo}
\begin{document}

\begin{abstract}
It is well-known to be impossible to trisect an arbitrary angle and duplicate an arbitrary cube by a ruler and a compass. On the other hand, it is known from the ancient times that these constructions can be performed when it is allowed to use several conic curves.
In this paper, we prove that any point constructible from conics can be constructed using a
 ruler and a compass, together with a single fixed non-degenerate conic  different from a circle.

\end{abstract}

\keywords{conic-constructible points, ruler and compass, conics   }

\subjclass[2010]{51M15(primary), 12F05(secondary)}

\maketitle

\section{introduction}

Trisecting an arbitrary angle and doubling  the cube by  ruler and compass are two of  the famous problems posed by Greeks which are known to be unsolvable. The first proof of the impossibility for the geometric constructions is attributed to Wantzel (1837). For  history on this subject, we refer the reader  to \cite[pp. 25-26]{S}. 

It is also well known that the trisection and duplication are possible if it is allowed to use one or more conic sections in addition to ruler and compass(\cite{V}). The old constructions used hyperbolas and parabolas as conic sections. Recently, Hummel \cite{H} and Gibbins--Smolinsky \cite{GS} independently found the constructions using ellipses.

Gibbins and Smolinsky, at the end of their paper,  asked if one can reduce the number of types of ellipses  for the involved constructions. More specifically Hummel, in the final remark of his paper,  asked if all points constructible from ellipses are  constructible using the same ellipse in addition to a ruler and a compass. The goal of this paper is to show that the answer is Yes.  More generally, we prove that every conic-constructible point can be obtained using a ruler and a compass, together with a single fixed non-degenerate conic  different from a circle (Theorem \ref{main}). It can be  pointed out that our result resembles the Poncelet-Steiner's theorem which states that any construction with a ruler and a compass can be accomplished by a ruler together with a fixed circle and its center(see \cite[\S3.6]{E}).

The precise meaning of the conic-constructibility will be reviewed in the next section. In \S3, we will state the main result and give a proof.

\section{Conic-constructible points}
The point $(x,y)$ in the plane $\mathbb{R}^2$ is identified with a complex number $x + i y \in \mathbb{C}$. 
Starting from the initial set $P=P_0$ of points,  perform the drawings:

(i) Given two points in $P$, draw a line through the two points using a ruler.

(ii) Given two points $z_1$ and $z_2$ in $P$, draw a circle centered at $z_1$ with radius $|z_1-z_2|$ using a compass.
  
Let $P_1$ be the set of points by adjoining all the intersections of  lines and circles from the above drawings.  Now, replacing $P_0$ by $P_1$ and running the above process, we get the set $P_2$. Repeating this  inductively, we get a sequence of sets
\[
P= P_0 \subset P_1 \subset P_2 \subset \cdots.
\]
A point is called  \textit{constructible from $P$} if it is inside $P_\infty := \bigcup_{i=0}^\infty P_i$. A point is called \textit{constructible} if it is constructible from the initial set $\{ 0,1 \}$ in $\cc$. It is well known that a point is constructible if and only if it lies in a subfield $F$ of $\cc$  which has  a finite sequence of subfields starting from rational numbers:
\[
\mathbb{Q} = F_0 \subset F_1 \subset F_2 \subset \ldots \subset F_n = F
\]
such that $[F_{i+1}: F_i] = 2$ for each $i$. 

Now in addition to the above drawings (i) and (ii), consider another one:

(iii) Draw all parabolas, ellipses, and hyperbolas having foci in $P$, directrix lines which are obtained from (i), and eccentricities equal to the length of a segment $\overline{z_1 z_2}$ for some $ z_1, z_2 \in P$. 

Let $P=Q_0$ and let $Q_1$ be the set of points by adjoining all the intersections of  lines and circles and conics from the  drawings (i), (ii), and (iii). Replacing $Q_0$ by $Q_1$ and running the same kind of process, we get $Q_2$.  Again by induction, we get a sequence of sets
\[
P= Q_0 \subset Q_1 \subset Q_2 \subset \cdots.
\]
A point is called  \textit{conic-constructible from $P$} if it is inside $Q_\infty := \bigcup_{i=0}^\infty Q_i$. A point is called \textit{conic-constructible} if it is conic constructible from the initial set $\{ 0,1 \}$ in $\cc$.

Note that for any $P$, both the  set of  constructible points derived from $P$ and the set of  conic-constructible points derived from $P$ are subfields of $\cc$, because the process (i) and (ii) are already enough to produce the complex numbers given by the operations $+, -, \times, \div$. The $x$-coordinates of the constructible (or conic-constructible) points derived from $P$ again form a subfield of $\mathbb{R}$, which are called \textit{constructible (or conic-constructible) numbers derived from $P$}.
Videla found a useful criterion:
\begin{proposition} \label{videla} {\rm (\cite[Theorem 1, 2]{V})} \\ 
(1) The set of conic-constructible points forms the smallest subfield of $\cc$ containing 0, 1, and $i$ which is closed under conjugation, square roots and cube roots.\\
(2) A point in $ \cc$ is conic-constructible if and only if it is contained in a subfield of $\cc$ of the form $\mathbb{Q}(\alpha_1, \alpha_2, \ldots, \alpha_n)$, where $\alpha_1^{k_1} \in \mathbb{Q}$ and $\alpha_i^{k_i} \in \mathbb{Q}(\alpha_1, \ldots, \alpha_{i-1})$ for  $2 \le i \le n$,  where  $k_i  \in \{2,3\}$ for each $i = 1, 2, \ldots, n$. \qed
\end{proposition}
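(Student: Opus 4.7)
\emph{Strategy.} I would prove (2) first---as a pair of inclusions between the conic-constructible field and the union of all tower fields described---and then derive (1) as a corollary. The tower-field in (2) is patently the smallest subfield of $\cc$ containing $0$ and $1$ closed under square roots and cube roots; once (2) is established, (1) follows because the conic-constructible field is automatically closed under conjugation (reflection in the real axis uses only ruler and compass) and contains $i=\sqrt{-1}$.

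\emph{Forward inclusion.} I would induct on the filtration $Q_0 \subset Q_1 \subset \cdots$. Fix a subfield $F \subset \cc$ containing the coordinates of every point in $Q_j$. Rule (iii) only permits conics whose foci, directrix coefficients and eccentricity lie in $F$; hence their defining polynomials have coefficients in $F$, as do those of lines and circles from (i) and (ii). Intersections of two lines, of a line with a circle or a conic, and of two circles cost at most a quadratic extension. The delicate case is the intersection of two genuine conics $C_1,C_2$, given by symmetric matrices $A_1,A_2$ over $F$. I would use the pencil $\lambda A_1+\mu A_2$: its singular members are the zeros of the cubic $\det(\lambda A_1+\mu A_2)=0$, whose roots generate an extension of degree at most $3$ over $F$. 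The corresponding singular member factors as a union of two lines over this extension, and intersecting each of them with $C_1$ costs at most one further quadratic extension. Every new point of $Q_{j+1}$ thus lies in a tower of the type prescribed by (2), and the induction closes.

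\emph{Backward inclusion and main obstacle.} For the reverse inclusion it suffices to show that the conic-constructible field contains $\mathbb{Q}$ and is closed under square roots and cube roots. The ring operations and square roots are the classical Euclidean content. For a cube root of $z = r e^{i\theta}$, I would extract $r^{1/3}$ by a Menaechmus-style intersection of two parabolas (for instance $y = x^2$ and $x = \tfrac{1}{r}y^2$), trisect $\theta$ by the classical hyperbola trisection, and recombine. Both ingredients are explicit drawings permitted by rule (iii). The chief obstacle is the two-conic intersection in the forward direction: Bezout alone bounds the intersection degree by $4$, which is not a priori a tower of $\{2,3\}$-extensions; the pencil/resolvent-cubic argument is precisely the tool that factors this degree-$4$ datum into one cubic (for a singular member of the pencil) followed by two quadratics (for each line-conic intersection), matching the structure demanded by (2).
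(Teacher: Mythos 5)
The paper does not actually prove this proposition: it is imported verbatim from Videla \cite[Theorems 1, 2]{V}, which is why it carries a \verb|\qed| with no proof environment. So there is no internal argument to compare against; the relevant benchmark is Videla's original proof, and your sketch is essentially a reconstruction of it --- the pencil/resolvent-cubic device for two-conic intersections in the forward direction, and Menaechmus-style cube-root extraction plus hyperbola trisection in the backward direction are exactly the standard ingredients. The reduction of (1) to (2) via conjugation-closure and $i=\sqrt{-1}$ is also fine, granted the easy observations that the union of all the tower fields is itself a field (concatenate towers) and that the conjugate of a tower is a tower.

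There is, however, one step in your forward inclusion that does not close as written. The tower in (2) is a \emph{radical} tower: each $\alpha_i$ satisfies $\alpha_i^{k_i}\in\mathbb{Q}(\alpha_1,\ldots,\alpha_{i-1})$ with $k_i\in\{2,3\}$, i.e.\ each step adjoins an actual square or cube root. Your pencil argument produces a root $\lambda_0$ of the cubic $\det(\lambda A_1+\mu A_2)=0$, which generates an extension of degree at most $3$ over $F$ --- but a degree-$3$ extension is not in general of the form $F(\sqrt[3]{a})$ (a quadratic extension, by contrast, is always radical via the quadratic formula). To land in the form prescribed by (2) you must invoke Cardano's formula: after adjoining $\sqrt{-3}$ and a square root of the relevant discriminant expression, the root of the cubic is obtained by one genuine cube-root adjunction, so $\lambda_0$ lies in a $\{2,3\}$-radical tower over $F$ even when $F(\lambda_0)$ itself is not radical over $F$. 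This is standard but it is the precise point where ``degree at most $3$'' becomes ``a tower of the type prescribed by (2),'' and your sketch elides it. A second, smaller inaccuracy: the singular member $\lambda_0 A_1+\mu_0 A_2$ does not in general split into two lines over $F(\lambda_0)$; factoring a rank-$2$ quadratic form costs one more square root (of a discriminant). This is harmless --- quadratic steps are permitted --- but the factorization should be charged its extension, not asserted to happen ``over this extension.'' With these two repairs (both routine), your argument is correct and is, in substance, Videla's.
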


The point of Proposition \ref{videla} (2)  is that taking square and cube roots of a given complex number are precisely the constructions required to obtain all the conic-constructible points.  In other words, providing the constructions for real cube roots and trisections of angles are sufficient to get all conic-constructible points.

Let $K$ be a subfield of $\mathbb{R}$ such that every positive number $x \in K$ has a square root in $K$.  A line passing through two points of $K^2$ is called a \textit{line in $K$}. A circle is called a \textit{circle in $K$} if its center is in $K^2$ and it passes through a point in $K^2$. Similarly, a conic (parabola, ellipse, or hyperbola)  is called a \textit{conic in $K$} if its foci are in $K^2$, its directrix line is in $K$, and its eccentricity is in $K$.
Recall the following useful fact.
\begin{proposition}  \label{useful}  \  Let $E$ be a non-degenerate conic (different from a circle), defined by the equation $ax^2 + bxy +cy^2 +dx+ ey+f = 0$. Then $E$ is  in $K$ if and only if the coefficients $a,b, c, d, e, f$ are in $K$ (after suitable rescaling).  
\label{fact} \end{proposition}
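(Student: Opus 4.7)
The plan is to prove both directions separately; the forward direction is a direct computation, while the reverse direction requires classifying by conic type and running the standard reduction to canonical form while tracking which operations are performed over $K$.

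For the forward direction, suppose $F = (f_1, f_2) \in K^2$ is a focus, $\ell : \alpha x + \beta y + \gamma = 0$ is a directrix in $K$ (so $\alpha, \beta, \gamma \in K$), and $\epsilon \in K$ is the eccentricity. The focus-directrix characterization gives
\[
(x - f_1)^2 + (y - f_2)^2 \;=\; \frac{\epsilon^2 (\alpha x + \beta y + \gamma)^2}{\alpha^2 + \beta^2},
\]
and after clearing the denominator $\alpha^2 + \beta^2 \in K$ the expanded coefficients are polynomial expressions in the data, hence in $K$.

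For the reverse direction, start with $ax^2 + bxy + cy^2 + dx + ey + f = 0$ with $a,\ldots,f \in K$ defining a non-degenerate non-circular conic, and split by the sign of $\Delta = b^2 - 4ac$. In the central case $\Delta \ne 0$, solving $\partial_x F = \partial_y F = 0$ locates the center in $K^2$; after translating, the quadratic part is diagonalized by a rotation whose eigenvalues $\lambda_\pm = \tfrac12\bigl((a+c) \pm \sqrt{(a-c)^2 + b^2}\bigr)$ lie in $K$, since $(a-c)^2 + b^2 \ge 0$ and $K$ is closed under square roots of nonnegative reals. The associated unit eigenvectors lie in $K^2$ for the same reason. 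The resulting standard form $\lambda_+ X'^2 + \lambda_- Y'^2 + f' = 0$ yields the semi-axes $A, B$, the eccentricity ($\sqrt{1 - B^2/A^2}$ for an ellipse, $\sqrt{1 + B^2/A^2}$ for a hyperbola), the focal distance, and hence the foci and directrices, all inside $K$. In the parabolic case $\Delta = 0$, the same type of rotation reduces the equation to $(a+c)X'^2 + d'X' + e'Y' + f' = 0$; completing the square in $X'$ and then translating in $Y'$ yields the canonical form $X''^2 = 4pY''$ with $p \in K$, from which the focus $(0,p)$, directrix $Y'' = -p$, and eccentricity $1$ transport back by a rigid motion defined over $K$.

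The hard part is bookkeeping: one must choose signs carefully when extracting unit eigenvectors and when identifying the major semi-axis, and must treat the parabolic case separately from the central case. No new ideas are required beyond the classical reduction of a real quadratic curve to canonical form; the content of the proposition is that this reduction respects any subfield $K \subset \mathbb{R}$ closed under nonnegative square roots.
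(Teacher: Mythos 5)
Your proof is correct, but it is worth noting that the paper does not actually prove this proposition: it cites \cite[Proposition 3]{GS}, which carries out essentially this computation for ellipses, and then asserts without detail that ``the argument can be modified to work also for parabolas and hyperbolas.'' What you have written is precisely that missing modification, done uniformly: your forward direction (expand the focus--directrix relation and clear the denominator $\alpha^2+\beta^2$) is the same computation as in the cited source, and your reverse direction organizes all three types under the single dichotomy $\Delta = b^2-4ac \ne 0$ versus $\Delta = 0$, tracking that every operation --- solving the linear system for the center, extracting eigenvalues $\lambda_\pm = \tfrac12\bigl((a+c)\pm\sqrt{(a-c)^2+b^2}\bigr)$, normalizing eigenvectors, completing squares --- stays in $K$ because $K$ is closed under square roots of its nonnegative elements. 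This buys a self-contained proof where the paper offers only a pointer, and the central-case treatment genuinely unifies ellipse and hyperbola rather than repeating the ellipse argument verbatim. Two small points deserve explicit mention in a final write-up: the hypothesis that $E$ is not a circle is exactly what guarantees $\epsilon \ne 0$, so the directrices $X' = \pm A/\epsilon$ are well defined and your division is legitimate; and when $b = 0$ the eigenvector recipe $(b/2,\lambda-a)$ degenerates, so you should note that the quadratic part is already diagonal and the rotation is trivial (the case $b=0$, $a=c$ being the excluded circle). Neither affects the structure of your argument.
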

\begin{proof}
The proof for the case of ellipse is given in \cite[Proposition 3]{GS} and the   argument can  be modified to work also for parabolas and hyperbolas. 
\end{proof}

\section{result}

We introduce two more  notions: $C$-constructible points and $e$-constructible points.   Let $C$ be a non-degenerate conic  in the field  of constructible numbers.  For the initial set $P \subset \cc$, a point is \textit{$C$-constructible from $P$} if it is  obtained by the same process as conic-constructible points except that the conics in  step (iii) are confined to the fixed one: $C$.

 We call an ellipse or a hyperbola of eccentricity $e>0$ to be ``regular'' if  it is given by the equation
\[
(1-e^2) (x-a)^2 + (y-b)^2 = \lambda^2
\]
for some $a, b, \lambda \in \mathbb{R}$. Also, we call a  parabola to be regular if it is of the form
\[
x = \lambda(y-a)^2 +b
\]
for some $a, b, \lambda \in \mathbb{R}$.

Let $e>0$ be a constructible number. A point is \textit{$e$-constructible from $P$} if it is obtained by the same process as conic-constructible points except that 

(1) the conics in  step (iii) are confined to the regular ones of eccentricity $e$, and

(2) the intersections of two conics, neither of which is a circle, are not adjoined. \\
In other words, (1) a conic is drawn only when it is regular with eccentricity $e$, and (2) for any drawn conic different from a circle, only the intersections with lines and circles are counted for the next stage.
\\
As before when $P = \{0,1 \}$, we say $C$-constructible and $e$-constructible respectively, for short. By definition, if $C$ is a regular conic with eccentricity $e$, then
\begin{center}
$C$-constructible \ $\Rightarrow$ \ $e$-constructible \ $\Rightarrow$ \ 
conic-constructible.
\end{center}

\begin{lemma} \label{e}
Let $e>0$ be any constructible  number.
Every conic-constructible point is $e$-constructible.
\end{lemma}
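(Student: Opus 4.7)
The strategy is to apply Proposition \ref{videla}(1). I plan to verify that the set of $e$-constructible points is a subfield of $\cc$ containing $\{0,1,i\}$ that is closed under complex conjugation, square roots, and cube roots; since the conic-constructible points form the smallest such subfield, this yields the lemma at once. The field axioms, closure under conjugation, and closure under square roots all follow from the ruler-and-compass steps (i) and (ii) alone, so the real content lies in closure under cube roots.

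Writing $w=re^{i\theta}$, the cube roots of $w$ are $\sqrt[3]{r}\,e^{i(\theta+2\pi k)/3}$ for $k=0,1,2$, so closure under cube roots reduces to two separate tasks: (A) given an $e$-constructible real $k>0$, construct $\sqrt[3]{k}$; and (B) given a constructible $c=\cos\theta\in[-1,1]$, construct the trisection value $\cos(\theta/3)$, a real root of $4t^{3}-3t-c=0$. Both (A) and (B) are instances of extracting a real root of a depressed cubic with $e$-constructible coefficients.

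I plan to execute each task by intersecting a single carefully chosen regular conic of eccentricity $e$ with a single carefully chosen circle. Eliminating one coordinate between a regular conic $(1-e^{2})(x-a)^{2}+(y-b)^{2}=\lambda^{2}$ (or its parabolic analogue $x=\lambda(y-a)^{2}+b$ when $e=1$) and a circle $(x-c)^{2}+(y-d)^{2}=r^{2}$ yields a quartic polynomial in one variable whose coefficients are polynomial functions of the parameters $a,b,c,d,\lambda^{2},r^{2}$. One then matches these coefficients against the factored targets $y(y^{3}-k)=0$ for (A) and $t(4t^{3}-3t-c)=0$ for (B); since there are six free parameters and only three equations to satisfy, there is ample room to solve the matching system over the constructible reals. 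A quick check in the $e=1$ case with the regular parabola $x=2y^{2}$ and the circle $(x-1)^{2}+(y-c/2)^{2}=1+c^{2}/4$ already produces the trisection cubic in the $y$-coordinate, confirming the mechanism.

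The main obstacle I foresee is ensuring that the chosen parameters yield a genuine regular conic and a genuine circle, i.e.\ $\lambda^{2}>0$ and $r^{2}>0$. For parabolas ($e=1$) and ellipses ($0<e<1$) the natural choice of parameters satisfies these inequalities after a direct verification. For hyperbolas ($e>1$) positivity can fail in some regimes; this will be handled by a preliminary rescaling, replacing the target $\sqrt[3]{k}$ by $\sqrt[3]{km^{3}}=m\sqrt[3]{k}$ for a suitably chosen constructible $m$ (and analogously for the trisection cubic) and dividing back at the end. After rescaling, the solution parameters will lie in a range where the regularity and positivity conditions are met, completing the construction uniformly for every admissible eccentricity $e$.
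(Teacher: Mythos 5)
Your proposal is correct and takes essentially the same approach as the paper: the paper likewise reduces, via Videla's criterion (it uses part (2) and induction on the tower, where you use the minimality statement (1)), to constructing real cube roots and trisections, and realizes each by intersecting a circle with a regular conic of eccentricity $e$ so that elimination gives $y=x^2$ and the quartics $x^4-rx=0$ and $x(4x^3-3x-q)=0$ --- the only difference being that the paper writes the two curve pairs down explicitly instead of solving your coefficient-matching system. Your attention to positivity in the hyperbola case ($\lambda^2>0$ after completing the square) and the rescaling fix actually address a subtlety that the paper passes over when it simply asserts that its conics are regular for every $e>1$.
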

\begin{proof}
 Let $z \in \cc$ be a conic-constructible point. By Proposition \ref{videla}, $z \in  \mathbb{Q} (\alpha_1, \alpha_2, \ldots, \alpha_n)$, where either $\alpha_i^2$ or $\alpha_i^3$ is contained in $ \mathbb{Q} (\alpha_1, \ldots, \alpha_{i-1})$ for each $i$.
We will show that $z$ is $e$-constructible by induction on $n$.  Suppose that all the points in $H:= \mathbb{Q} (\alpha_1, \alpha_2, \ldots, \alpha_{n-1})$ are $e$-constructible.
 If $\alpha_n^2 \in H$, then $\alpha_n$ is constructible from $H$. 
 
 Now suppose $\alpha_n^3 =  re^{i \theta} \in H$.  Let $q = \cos \theta$ and let $K$ be the field of constructible numbers derived from $0,1$ and $r, q$.

First consider the intersection of the following circle and conic:
\[
x^2 + y^2 - rx - y = 0 \ \ \text{and} \ \ (1-e^2)x^2 + y^2  - rx - (1-e^2)y  = 0  .
\]
By Proposition \ref{fact}, these are a circle and a conic in $K$.
From these two equations, we get $x^4 - rx = 0$. Hence the $x$-coordinate of the intersection points other than the origin corresponds to the cube root of $r$. 

Next for $q = \cos \theta$, by triple-angle formula, $\cos \left(  {\theta}/{3} \right)$ is a real solution  to the equation 
\[
4x^3 - 3x - q = 0.
\]
To get this equation, we intersect the following circle and conic:
\[
x^2 + y^2 - \frac{q}{4}x - \frac{7}{4}y = 0 \ \ \text{and} \ \ (1-e^2)x^2 + y^2- \frac{q}{4}x - \left(\frac{7}{4} - e^2 \right)y = 0  .
\]
Again, these are a circle and a conic in $K$. 
Note that the conics in the above are regular of eccentricity $e$. 
From these intersections, we constructed the numbers $\sqrt[3]{r}$ and $ \cos (\theta/3)$, and eventually the point $\alpha_n$. Thus so far we have shown that $\alpha_n$ is an $e$-constructible point derived from 0, 1, and $r, q$. Since we assumed that all the points in $H$ are $e$-constructible and $\alpha_n^3 = re^{i \theta} \in H$, $r$ and $q= \cos \theta$ are also $e$-constructible. 
Therefore, $\alpha_n$ is $e$-constructible and thus all the points in $\mathbb{Q}(\alpha_1, \alpha_2, \ldots, \alpha_{n})$ are $e$-constructible.
\end{proof}

Now comes our main result: Every conic-constructible point can be constructed by using a single fixed  conic in addition to a ruler and a compass.

\begin{theorem} \ Let $C$ be any non-degenerate conic, different from a circle, in the field of constructible numbers.
Then every  conic-constructible point is $C$-constructible.
\label{main} \end{theorem}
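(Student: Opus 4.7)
The plan is to combine Lemma~\ref{e} with the classical fact that two non-degenerate conics of the same type (ellipse, hyperbola, or parabola) with the same eccentricity are similar as plane figures. Let $e>0$ denote the eccentricity of $C$; by Proposition~\ref{useful}, $e$ is a constructible number. By Lemma~\ref{e}, every conic-constructible point is $e$-constructible, so it suffices to prove that every $e$-constructible point is $C$-constructible.

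The argument proceeds by induction on the stage $Q_i$ in the definition of $e$-constructibility from $\{0,1\}$. The only nontrivial inductive step is when a new point arises as an intersection of a regular conic $E$ of eccentricity $e$ with a previously constructed line or circle $L$, since all other operations are already ruler-and-compass. By the inductive hypothesis, the parameters $(a,b,\lambda)$ defining $E$ and the defining data of $L$ are $C$-constructible.

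The key idea is to simulate this intersection using only $C$. Since $C$ and $E$ share the same eccentricity and are of the same type (both ellipses when $e<1$, both hyperbolas when $e>1$, both parabolas when $e=1$), there is a similarity $T$ of $\mathbb{R}^2$ --- a composition of rotation, uniform scaling, and translation --- with $T(C)=E$. The parameters of $T$ can be read off from the center and principal axes of $C$ (determined by the coefficients of the equation of $C$) together with the parameters of $E$, using only arithmetic and square roots, and hence are $C$-constructible. Since similarities with $C$-constructible parameters are realizable by ruler and compass and they send lines to lines and circles to circles, we will obtain
\[
E \cap L \;=\; T\bigl(C \cap T^{-1}(L)\bigr),
\]
which is $C$-constructible: $T^{-1}(L)$ is a line or circle with $C$-constructible defining data, its intersection with $C$ is adjoined in the $C$-construction, and the subsequent application of $T$ is a ruler-and-compass operation.

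The main anticipated obstacle is the case analysis among the three types of conics and writing $T$ down explicitly from the general equation of $C$. This reduces to diagonalizing a $2\times 2$ symmetric matrix with constructible entries (to locate the center and principal axes of $C$, or the axis and vertex when $C$ is a parabola), which is routine linear algebra in a field closed under square roots and therefore stays within the $C$-constructible numbers.
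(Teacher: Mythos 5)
Your proposal is correct and takes essentially the same route as the paper: reduce via Lemma~\ref{e} to $e$-constructibility with $e$ the eccentricity of $C$, then handle each intersection of a regular conic with a line or circle by conjugating through a similarity that carries $C$ to that conic, noting that the similarity has constructible parameters and sends lines and circles to lines and circles. The only cosmetic difference is that you absorb the rotation into the similarity $T$, whereas the paper first normalizes $C$ to regular position once and for all (so that within the induction only a magnification and a translation are needed); this changes nothing of substance.
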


\begin{proof} Let $e>0$ be any constructible number.
By Lemma \ref{e}, it suffices to prove that every $e$-constructible point is $C$-constructible.
We first introduce a hierarchy on the $e$-constructible points starting from the field of constructible points $\F_0$:
\[
\F_0 \subset \F_1 \subset \F_2 \subset \cdots .
\]
Draw all the regular conics in $\F_0$ of eccentricity $e$. Let  $Q_{1}^e$ be the set of points by adjoining to $\F_0$ all the intersections of lines and circles with any of the  drawn regular conics. Let $\F_1$ be the field of constructible points derived from $Q_1^e$.  

Inductively, for each $k \ge 0$, 
let $Q_{k+1}^e$ be the set of points by adjoining to $\F_k$ all the intersections of lines and circles with any of the drawn regular conics in $\F_k$ of eccentricity $e$. And define $\F_{k+1}$ as the field of constructible points derived from $Q_{k+1}^e$. Then the set of $e$-constructible points coincides with $\bigcup_{k=0}^\infty \F_k$.

Now we prove that every $e$-constructible point is  $C$-constructible by induction on $k$. Certainly the constructible points $\F_0$ are $C$-constructible. Now assume that all the points in $\F_k$ are $C$-constructible. Take any point $z \in \F_{k+1} \setminus \F_k$ which is obtained as an intersection of a circle $R_k$ in $\F_k$ and a regular conic $C_k$ in $\F_k$ of eccentricity $e$.

We may assume that $C$ is regular. Indeed, we may rotate $C$ by an angle $\theta$ where $\cos \theta$ and $\sin \theta$ are constructible,  to send it to a regular conic, because $C$ is in the field of constructible numbers. From now on, we assume that $C$ is regular.

Since both $C$ and $C_k$  are in $\F_k$ of the same eccentricity, $C_k$ can be obtained from $C$ by a magnification by a factor $\lambda >0$ and a translation in $a+bi$, where $a,b$ and $\lambda$ are in $\F_k$.  Therefore, the intersection point $z \in R_k \cap C_k$ can be obtained as follows:

(a) Translate the circle $R_k$ by a point in $\F_k$ and magnify  by a factor in $\F_k$  to get $R_k'$. 

(b) Intersect $R_k'$ with $C$ to get the corresponding intersection point $z'$.

(c) Reverse the process (a) to get $z$ from $z'$. 

Since we assumed that all of the points in $\F_k$ are $C$-constructible, the intersection $z' \in R_k' \cap C$ is a $C$-constructible point, because $R_k'$ is a circle in $\F_k$. We conclude that $z$ is a $C$-constructible point. 

Since every point in $\F_{k+1}$ is constructible from $\F_k$ and the intersection points  $z \in R_k \cap C_k$ obtained as in the above, it is $C$-constructible. This completes the proof.
\end{proof}


\vspace{0.5cm}
 
\noindent \textit{Acknowledgment.} This paper grew out from an   R\&E project ``A study on the numbers constructible from conic sections'', which was supported by Seoul Science High School during March-December 2012.
 
\vspace{0.5cm}

\vspace{1cm}

\noindent Insong Choe \\
Department of Mathematics, Konkuk University, 1 Hwayang-dong, Gwangjin-Gu, Seoul 143-701, Korea.\\
Email: \texttt{ischoe@konkuk.ac.kr}\\
\\
Seungjin Baek,  Yoonho Jung, Dongwook Lee and Junggyo Seo\\
Seoul Science High School, Uamgil 63 (Hyewha-dong 1-1), Jongro-ku, Seoul 110-530, Korea.\\
Emails: \texttt{bjh7790@naver.com}, \texttt{yoonhoim@nate.com}, \texttt{dwleeid@naver.com}, \texttt{sjgceo@naver.com}
\end{document}